\newtheorem{theorem}{Theorem}[section]
\newtheorem{lemma}[theorem]{Lemma}
\newtheorem{proposition}[theorem]{Proposition}
\newtheorem{corollary}[theorem]{Corollary}
\newtheorem{question}[theorem]{Question}
\numberwithin{equation}{section}
\begin{document}

\title{Diagonals of separately continuous functions and their analogs}

\author{Olena Karlova}

\author{Volodymyr Mykhaylyuk}

\author{Oleksandr Sobchuk}

\begin{abstract}
  We prove that for a topological space $X$, an equiconnected space $Z$ and a Baire-one mapping $g:X\to Z$ there exists a separately continuous mapping  $f:X^2\to Z$ with the diagonal $g$, i.e. $g(x)=f(x,x)$ for every $x\in X$. Under a mild assumptions on $X$ and  $Z$ we obtain that diagonals of separately continuous mappings $f:X^2\to Z$ are exactly Baire-one functions, and diagonals of mappings $f:X^2\to Z$ which are continuous on the first variable and Lipschitz (differentiable) on the second one, are exactly the functions of stable first Baire class.
\end{abstract}


\maketitle

\section{Introduction}

Let $f:X^2\to Y$ be a mapping. We call a mapping  $g:X\to Y$, $g(x)=f(x,x)$, {\it the diagonal of $f$.}

Investigations of diagonals of separately continuous functions $f:X^2\to\mathbb R$ started in the classical work of R.~Baire \cite{Baire}, who showed that diagonals of separately continuous functions of two real variables are exactly Baire-one functions, i.e. pointwise limits of continuous functions. His result was generalized by A.~Lebesgue and H.~Hahn for real-valued functions of $n$ real variables (see \cite{Leb1, Leb2, Ha}).

Since the second half the XX-th century, Baire classification of separately continuous mappings {and their} analogs {was} intensively studied by many mathe\-ma\-ticians (see \cite{Mo,Ru,Ve,MMMS,B,Bur}). Let us observe that W.~Rudin \cite{Ru} was the first who applied partitions of unity to classify separately continuous functions defined on {the} product of a metrizable {space} and a topological {space} and with values in locally convex spaces. Later this method has been developed and now a partition of unity is a familiar tool for Baire classification of separately continuous functions.

A problem {on construction} of separately continuous functions of $n$ variables with the given diagonal {of $(n-1)$-th} Baire class was {solved} in  \cite{MMS1}. In \cite{My} it was shown that for any topological space $X$ and a function $g:X\to\mathbb R$ {of $(n-1)$-th} Baire class there exist a separately continuous function  $f:X^n\to\mathbb R$ with the diagonal $g$.

Also naturally appear problems on diagonals of $CL$- and $CD$-mappings, i.e. diagonals of mappings  $f:X^2\to Y$ which are continuous with respect to the first variable and Lipschitz or differentiable with respect to the second variable.

In the given paper we generalize the result from~\cite{My} which concerns a construction of separately continuous mappings $f:X^2\to Z$ with {a} given diagonal of the first Baire class {to} the case of an equiconnected space $Z$. We also describe diagonals of $CL$- and $CD$-mappings {for wide classes} of spaces $X$ and $Z$.

\section{Preliminaries}

By $P(X,Y)$ we will denote the collection of all mappings \mbox{$f:X\to Y$} with a property $P$. In particular, by $C(X,Y)$ we denote the collection of all continuous mappings between topological spaces  $X$ and  $Y$.

For a mapping
\mbox{$f:X\times Y\to Z$} and a point $(x,y)\in X\times Y$ we write $$f^x(y)=f_y(x)=f(x,y).$$

Let $P$ and $Q$ be some properties of mappings. We will use the symbol $PQ(X\times Y,Z)$ to denote the collection of all mappings $f:X\times Y\to Z$ which have a property $P$ with respect to the first variable and a property $Q$ with respect to the second one, i.e. $\{y\in Y: f_y\in P(X,Z)\}=Y$  and $\{x\in X: f^x\in Q(Y,Z)\}=X$. If $f\in PP(X\times Y,Z)$, then $f$ is called {\it a mapping which separately has a property $P$}. So, for topological spaces $X$, $Y$ and $Z$ mappings from the class $CC(X\times Y,Z)$ are said to be {\it separately continuous}.

Recall that a mapping $f$ between topological spaces $X$ and $Y$ is called {\it a mapping of the first Baire class} or {\it a Baire-one mapping} if there exists a sequence $(f_n)^{\infty}_{n=1}$ of continuous mappings $f_n:X\to Y$ which is pointwise convergent to $f$, i.e. $\lim\limits_{n\to\infty}f_n(x)=f(x)$ for every $x\in X$. The collection of all Baire-one mappings $f:X\to Y$ is denoted by $B_1(X,Y)$.

{\it An equiconnected space} is a pair $(X,\lambda)$, consisting of a topological space $X$ and a continuous function $\lambda:X\times X\times [0,1]\to X$ such that

{ (i)} $\lambda(x,y,0)=x$;

{ (ii)} $\lambda(x,y,1)=y$;

{ (iii)} $\lambda(x,x,t)=x$\\ for all $x,y\in X$ and $t\in [0,1]$.

\noindent The simplest example of {an} equiconnected space is a convex subset of a topological vector space {with $\lambda$ defined by}  $\lambda(x,y,t)=(1-t)x+t y$.

For a metric space $X$ by $|\cdot - \cdot|_X$ we denote {the} metric on this space.

\section{A method of constructing of separately continuous mappings}

In the following theorem we describe a method of a construction of separately continuous mappings with a given diagonal.

\begin{theorem}\label{th:3.1}
  Let $X$ be a topological space, $(Z,\lambda)$ a metric equiconnected space, let $(G_n)_{n=0}^{\infty}$ and $(F_n)_{n=0}^{\infty}$ be sequences of open sets $G_n$ and closed sets $F_n$ in $X^2$, $(\varphi_n:X^2\to [0,1])_{n=1}^{\infty}$ be a sequence of separately continuous functions,  $(g_n:X\to Z)^{\infty}_{n=1}$ be a sequence of continuous functions pointwise convergent to a function $g:X\to Z$, and $(\delta_n:X\to (0,+\infty))^{\infty}_{n=1}$ be a sequence of positive functions pointwise convergent to zero such that:

     {\rm (1)} $G_0=F_0=X^2$ and $\Delta=\{(x,x):x\in X\}\subseteq G_{n+1}\subseteq F_n\subseteq G_n$ for every $n\in\mathbb N$;

      {\rm (2)}  $X^2\setminus G_n\subseteq\varphi_n^{-1}(0)$ and $F_n\subseteq \varphi_n^{-1}(1)$ for every $n\in\mathbb N$;

       {\rm (3)} $|g_{n}(x)-g_{n}(y)|_Z\leq \delta_n(y)$ and $|g_{n+1}(x)-g_{n+1}(y)|_Z\leq \delta_n(y)$ for every $n\in\mathbb N$ and $(x,y)\in G_n$.

 \noindent Then the mapping $f:X^2\to Z$,
 \begin{equation}\label{eq:3.0}
 f(x,y)=\left\{\begin{array}{ll}
                         \lambda(g_n(x),g_{n+1}(x),\varphi_n(x,y)), & (x,y)\in F_{n-1}\setminus F_n\\
                         g(x), & (x,y)\in E=\bigcap\limits_{n=1}^{\infty} G_n.
                       \end{array}
 \right.
 \end{equation}
is separately continuous.
\end{theorem}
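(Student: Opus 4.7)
The plan is to verify separate continuity of $f$ point-by-point, splitting into the cases $(x_0,y_0)\notin E$ and $(x_0,y_0)\in E$. Set $A_n := F_{n-1}\setminus F_n$ for $n\ge 1$; then $\{A_n\}_{n\ge 1}$ together with $E$ partitions $X^2$, so the formula defines $f$ unambiguously. The structural identity that glues the pieces across the interface $\partial F_{n-1}$ is the following: by condition (1), $G_n\subseteq F_{n-1}$, so condition (2) upgrades to $X^2\setminus F_{n-1}\subseteq\varphi_n^{-1}(0)$, while simultaneously $F_{n-1}\subseteq\varphi_{n-1}^{-1}(1)$.

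For an off-$E$ point $(x_0,y_0)\in A_n$, I would take an open neighborhood $W\subseteq G_{n-1}\setminus F_n$ (using $F_{n-1}\subseteq G_{n-1}$ open and $F_n$ closed); a short chase through the nesting shows that every point of $G_{n-1}\setminus F_n$ lies in $A_{n-1}\cup A_n$. On each of these two pieces the formula is a composition of continuous and separately-continuous data, hence separately continuous on each of the pieces. Matching at the interface $\partial F_{n-1}$ is the delicate point. Fix $x_0$ and consider a sequence $y_k\to y_0$ with $(x_0,y_k)\in A_{n-1}$ (if none exists, no matching is required). Then $(x_0,y_k)\notin F_{n-1}\supseteq G_n$, so $\varphi_n(x_0,y_k)=0$; separate continuity of $\varphi_n$ in the second variable forces $\varphi_n(x_0,y_0)=0$, so by property (i) the $A_n$-formula at $(x_0,y_0)$ equals $g_n(x_0)$; simultaneously $\varphi_{n-1}(x_0,y_k)\to 1$ and property (ii) show the $A_{n-1}$-formula tends to $g_n(x_0)$ as well. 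The symmetric argument with $y_0$ fixed handles continuity in the first variable.

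For $(x_0,y_0)\in E$ one has $f(x_0,y_0)=g(x_0)$. Condition (3) with pairs in $E\subseteq G_n$ gives $|g_n(x_0)-g_n(y_0)|_Z\le\delta_n(y_0)\to 0$, hence $g(x_0)=g(y_0)$; the same triangle inequality shows $g(x)=g(x_0)$ for every $x$ with $(x,y_0)\in E$ and $g(y)=g(x_0)$ for every $y$ with $(x_0,y)\in E$, so $f\equiv g(x_0)$ on the two $E$-slices through $(x_0,y_0)$. Given $\varepsilon>0$, continuity of $\lambda$ on the compact set $\{g(x_0)\}\times\{g(x_0)\}\times[0,1]$, where $\lambda\equiv g(x_0)$ by property (iii), yields $\eta>0$ such that $|\lambda(a,b,t)-g(x_0)|_Z<\varepsilon$ whenever $|a-g(x_0)|_Z,|b-g(x_0)|_Z<\eta$. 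Choosing $n_0$ large (so that $|g_n(x_0)-g(x_0)|_Z$, $|g_n(y_0)-g(x_0)|_Z$ and $\delta_{n_0-1}(y_0)$ are all smaller than $\eta/3$) and taking a neighborhood of the varying coordinate contained in the open slice of $G_{n_0}$, I force any $A_m$-membership of the varying point to have $m\ge n_0$; condition (3) (applied with whichever of $G_{m-1}$ or $G_m$ contains the point) then controls $g_m$ and $g_{m+1}$ at the varying coordinate to within $\eta$ of $g(x_0)$, and the $\lambda$-estimate yields $|f-g(x_0)|_Z<\varepsilon$.

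The main obstacle is the gluing across $\partial F_{n-1}$ in the off-$E$ case: we only have separate continuity of $\varphi_n$, so joint continuity across the interface is unavailable. The decisive observation is that the nesting $G_n\subseteq F_{n-1}$ converts the exterior vanishing of $\varphi_n$ into a pointwise identity $\varphi_n(x_0,y_0)=0$ along any slice-sequence reaching the interface from outside, which is precisely what makes the two $\lambda$-formulas collapse to the common value $g_n(x_0)$.
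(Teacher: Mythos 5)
Your treatment of the points of $E$ is essentially the paper's own argument: the same compactness/uniformity step for $\lambda$ near $(z_0,z_0)\times[0,1]$, the same observation that condition (3) forces $g$ to be constant on the $E$-slices, the same choice of $n_0$ and of a neighborhood inside the slice of $G_{n_0}$, and the same use of (3) on the slice where the first coordinate varies. (One small quantifier slip there: you need $\delta_m(y_0)<\eta/3$ for \emph{all} $m\ge n_0-1$, not only for $m=n_0-1$, since the $\delta_m$ are not assumed monotone; this is harmless because $\delta_m(y_0)\to 0$.) Where you genuinely diverge is the off-$E$ case. The paper first proves the closed-form identity $f(x,y)=\lambda\bigl(\lambda(g_n(x),g_{n+1}(x),\varphi_n(x,y)),g_{n+2}(x),\varphi_{n+1}(x,y)\bigr)$ on $F_{n-1}\setminus F_{n+1}$, so that on each open set $G_n\setminus F_{n+1}$ the map $f$ is a \emph{single} composition of continuous and separately continuous data and no gluing is needed. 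You keep the two raw formulas on $A_{n-1}$ and $A_n$ and match them across $\partial F_{n-1}$; the algebraic content of your matching ($\varphi_n\equiv 0$ on $A_{n-1}$ because $G_n\subseteq F_{n-1}$, and $\varphi_{n-1}\equiv 1$ on $A_n\subseteq F_{n-1}$) is exactly the fact the paper's composite formula encodes.

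The one genuine defect is that your matching is carried out along a sequence $y_k\to y_0$, and you dismiss the interface when ``no such sequence exists.'' The theorem assumes only that $X$ is a topological space, so sequential continuity of $f^{x_0}$ at $y_0$ does not give continuity, and the dichotomy must be taken with respect to the closure of $S:=\{y:(x_0,y)\in A_{n-1}\}$, not its sequential closure: if $y_0\in\overline{S}$, then $\varphi_n^{x_0}$, being continuous and identically $0$ on $S$, satisfies $\varphi_n(x_0,y_0)=0$, whence $f(x_0,y_0)=g_n(x_0)$; since the continuous function $y\mapsto\lambda(g_{n-1}(x_0),g_n(x_0),\varphi_{n-1}(x_0,y))$ also takes the value $g_n(x_0)$ at $y_0$ (as $\varphi_{n-1}(x_0,y_0)=1$), it approximates $f(x_0,y_0)$ on a neighborhood of $y_0$, which together with the continuity of the $A_n$-formula gives continuity at $y_0$. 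If $y_0\notin\overline{S}$, shrink the neighborhood to avoid $S$. With the sequences replaced by this closure argument your proof is complete; the paper's identity~(\ref{eq:3.1}) is precisely the device that removes the interface, and hence this issue, altogether.
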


\begin{proof}
{We fix} $n\in\mathbb N$ and show that
\begin{equation}\label{eq:3.1}
f(x,y)=\lambda(\lambda(g_n(x),g_{n+1}(x),\varphi_n(x,y)), g_{n+2}(x),\varphi_{n+1}(x,y))
\end{equation}
for all $(x,y)\in F_{n-1}\setminus F_{n+1}$.

Let $(x,y)\in F_{n-1}\setminus F_{n}$. Since $G_{n+1}\subseteq F_n$, $(x,y)\not\in G_{n+1}$ and $(2)$ imply that $\varphi_{n+1}(x,y)=0$. Therefore, $$\lambda(\lambda(g_n(x),g_{n+1}(x),\varphi_n(x,y)), g_{n+2}(x),\varphi_{n+1}(x,y))=
$$
$$
=\lambda(g_n(x),g_{n+1}(x),\varphi_n(x,y))=f(x,y).$$

Now let $(x,y)\in F_{n}\setminus F_{n+1}$. According to~$(2)$, $\varphi_{n}(x,y)=1$ and $$\lambda(\lambda(g_n(x),g_{n+1}(x),\varphi_n(x,y)), g_{n+2}(x),\varphi_{n+1}(x,y))=$$$$=\lambda(g_{n+1}(x),g_{n+2}(x),\varphi_{n+1}(x,y))=f(x,y).$$

Since the mappings  $\lambda$, $g_n$, $g_{n+1}$, $g_{n+2}$ are continuous, $\varphi_n$ and $\varphi_{n+1}$ are separately continuous, $f$ is  continuous with respect to every variable (separately continuous) at each point of the open set  $G_n\setminus F_{n+1}$ for every $n\in\mathbb N$ as a composition of continuous mappings. Moreover, $f$ is separately continuous at every point of the open set $G_0\setminus F_1=F_0\setminus F_1$. Therefore, $f$ is separately continuous on the open set  $X^2\setminus E=\bigcup\limits_{n=1}^{\infty}(G_{n-1}\setminus F_n)$.

It remains to check the continuity of $f$ with respect to  $x$ and $y$ at the points of $E$. We first notice that $(x,y)\in G_n$ for any point $(x,y)\in E$, hence, in accordance with~$(4)$, $|g_{n}(x)-g_{n}(y)|_Z\leq\delta_n(y)$ for every $n\in\mathbb N$. After {letting} $n\to\infty$, we conclude $g(y)=g(x)$.

Fix $(x_0,y_0)\in E$, $\varepsilon>0$ and denote $z_0=f(x_0,y_0)=g(x_0)=g(y_0)$. Since  $\lambda$ is continuous and $\lambda(z_0,z_0,\alpha)=z_0$ for every $\alpha\in[0,1]$, there exists a neighborhood $W=\{z\in Z:|z-z_0|_Z<\delta\}$ of $z_0$ such that $|\lambda(z_1,z_2,\alpha)-z_0|_Z<\varepsilon$ for any $z_1,z_2\in W$ and $\alpha\in[0,1]$. Since $\lim\limits_{n\to\infty}g_n(x_0)=\lim\limits_{n\to\infty}g_n(y_0)=z_0$ and $\lim\limits_{n\to\infty}\delta_n(y_0)=0$, there exists a number $n_0\in\mathbb N$ such that $g_n(x_0)\in W$, $|g_n(y_0)-z_0|_Z<\frac{\delta}{2}$ and $\delta_n(y_0)<\frac{\delta}{2}$ for every $n\geq n_0$. {Then we choose} neighborhoods $U_0$ and $V_0$ of the points $x_0$ and $y_0$, respectively, such that  $A=(U_0\times\{y_0\})\cup(\{x_0\}\times V_0)\subseteq G_{n_0}$ and show that $|f(x,y)-z_0|_Z<\varepsilon$ for every $(x,y)\in A$.

 Let $(x,y)\in A\cap E$. If $y=y_0$, then $f(x,y_0)=g(x)=g(y_0)=z_0$, and if  $x=x_0$, then $f(x,y_0)=g(x_0)=z_0$. Hence, $f(x,y)=z_0$ and $|f(x,y)-z_0|_Z<\varepsilon$.

Let $(x,y_0)\in A\setminus E$. {We choose} a number $n\in\mathbb N$ such that $(x,y_0)\in G_{n}\setminus G_{n+1}$. Since $(x,y_0)\in G_{n_0}$, $n\geq n_0$. According to (\ref{eq:3.1}), $$f(x,y_0)=\lambda(g_n(x),g_{n+1}(x),\varphi_n(x,y_0)).$$ Moreover, by $(3)$ and by choosing of $n_0$,
$|g_{n}(x)-g_{n}(y_0)|_Z\leq \delta_n(y_0)<\frac{\delta}{2}$ and $|g_{n+1}(x)-g_{n+1}(y_0)|_Z\leq \delta_n(y_0)<\frac{\delta}{2}$. Taking into account that $|g_n(y_0)-z_0|_Z<\frac{\delta}{2}$ and $|g_{n+1}(y_0)-z_0|_Z<\frac{\delta}{2}$, we deduce that $g_{n}(x),g_{n+1}(x)\in W$. Therefore, $|f(x,y_0)-z_0|_Z<\varepsilon$.

Now let $(x_0,y)\in A\setminus E$, $n\geq n_0$ and $(x_0,y)\in G_{n}\setminus G_{n+1}$. Then $f(x_0,y)=\lambda(g_n(x_0),g_{n+1}(x_0),\varphi_n(x_0,y))$, $g_{n}(x_0),g_{n+1}(x_0)\in W$ and $|f(x_0,y)-z_0|_Z<\varepsilon$.

Hence, $f$ is continuous with respect to the variables $x$ and $y$ at the points of the set $E$ and the proof is complete.
\end{proof}

Recall that a subset $A$ of a topological space $X$ is called {\it a zero (co-zero) set} if there exists a continuous function $f:X\to [0,1]$ such that $A=f^{-1}(0)$ ($A=f^{-1}((0,1])$). Zero and co-zero sets are alternatively called {\it functionally closed} and {\it functionally open sets}, respectively.

\begin{corollary}\label{cor:3.2}
 Let $X$ be a topological space, $(Z,\lambda)$ a metrizable equiconnected space and $g\in B_1(X,Z)$. Then there exists a separately continuous mapping $f:X^2\to Z$ with the diagonal $g$.
\end{corollary}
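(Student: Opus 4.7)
The plan is to apply Theorem~\ref{th:3.1}, so the task reduces to assembling data $(G_n),(F_n),(\varphi_n),(g_n),(\delta_n)$ meeting its hypotheses. Fix a compatible metric $|\cdot-\cdot|_Z$ on $Z$ and, since $g\in B_1(X,Z)$, pick continuous $g_n:X\to Z$ with $g_n\to g$ pointwise. Put $G_0=F_0=X^2$ and $\delta_n(y)\equiv 1/n$, which is positive and converges to $0$.

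For $n\geq 1$, form the continuous function
$$\psi_n(x,y)=|g_n(x)-g_n(y)|_Z+|g_{n+1}(x)-g_{n+1}(y)|_Z,$$
which vanishes on $\Delta$, and then pass to the cumulative maxima $\Psi_n=\max(\psi_1,\ldots,\psi_n)$, still continuous, vanishing on $\Delta$, and satisfying $\Psi_n\leq\Psi_{n+1}$ and $\Psi_n\geq\psi_n$. Define
$$G_n=\{\Psi_n<1/n\},\qquad F_n=\{\Psi_n\leq 1/(n+1)\},$$
and take $\varphi_n:X^2\to[0,1]$ to be the continuous (hence separately continuous) piecewise-linear truncation $\varphi_n=\max(0,\min(1,n(n+1)(1/n-\Psi_n)))$.

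Then the verification is routine: $G_n$ is open, $F_n$ is closed, $\Delta\subseteq G_{n+1}$ because $\Psi_{n+1}$ vanishes on $\Delta$, $F_n\subseteq G_n$ since $1/(n+1)<1/n$, and $G_{n+1}\subseteq F_n$ because $\Psi_{n+1}<1/(n+1)$ combined with $\Psi_n\leq\Psi_{n+1}$ yields $\Psi_n\leq 1/(n+1)$. Condition~(2) is immediate from the shape of $\varphi_n$; condition~(3) holds because on $G_n$ we have $\psi_n\leq\Psi_n<1/n=\delta_n(y)$, bounding each summand in $\psi_n$. Theorem~\ref{th:3.1} then gives a separately continuous $f:X^2\to Z$ via~\eqref{eq:3.0}, and since every diagonal point lies in $\bigcap_n G_n=E$, the second branch of that formula yields $f(x,x)=g(x)$.

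The one nontrivial point is the nesting requirement $G_{n+1}\subseteq F_n\subseteq G_n$. Working directly with the individual $\psi_n$ would give sets $\{\psi_n<1/n\}$ with no a priori monotonicity in $n$, so the main trick is to replace $\psi_n$ by the running maximum $\Psi_n$: this automatically makes the chosen level sets decreasing in $n$ while still dominating each $\psi_n$ pointwise, which is precisely what condition~(3) needs.
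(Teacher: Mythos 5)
Your proposal is correct and follows essentially the same route as the paper: the paper also reduces to Theorem~\ref{th:3.1} by taking $G_n=\{(x,y):|g_k(x)-g_k(y)|_Z<\frac1n\ \forall k\le n+1\}$ and $F_n=\{(x,y):|g_k(x)-g_k(y)|_Z\le\frac1{n+1}\ \forall k\le n+1\}$ (a running-maximum construction equivalent to your $\Psi_n$, which likewise forces the nesting and condition (3) with $\delta_n\equiv 1/n$). The only cosmetic difference is that the paper obtains $\varphi_n$ abstractly from the fact that $G_n$ is co-zero and $F_n$ is a zero set, whereas you write down an explicit truncation.
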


\begin{proof}
Fix any metric $|\cdot - \cdot|_X$ on $X$ which generates its topological structure. Choose a sequence $(g_n)^{\infty}_{n=1}$ of continuous mappings $g_n:X\to Z$ which is pointwise convergent to $g$. Set $G_0=F_0=X^2$, $$G_n=\{(x,y)\in X^2: |g_k(x)-g_k(y)|_Z<\frac1n\,\,\,\,\,\forall\,k\leq n+1\}$$ and $$F_n=\{(x,y)\in X^2: |g_k(x)-g_k(y)|_Z\leq\frac{1}{n+1}\,\,\,\,\,\forall\,k\leq n+1\}.$$ Let us observe that the sequences $(g_n)^{\infty}_{n=1}$, $(G_n)_{n=0}^{\infty}$ and $(F_n)_{n=0}^{\infty}$ satisfy conditions $(1)$ and $(3)$ of Theorem~\ref{th:3.1}. Moreover, all the sets $G_n$ and $F_n$ are co-zero and zero sets, respectively. Then for every $n\in\mathbb N$ there exists a continuous function $\varphi_n:X^2\to[0,1]$ with $\varphi_n^{-1}(0)=X^2\setminus G_n$ and $\varphi_n^{-1}(1)=F_n$. Clearly, the sequence $(\varphi_n)_{n=1}^{\infty}$ satisfies~$(2)$. Now Theorem~\ref{th:3.1} implies the existence of a separately continuous mapping $f$ with {the} diagonal $g$.
\end{proof}

The {following} result gives a characterization of diagonals of separately continuous mappings.
\begin{theorem}\label{cor:3.3}
 Let $X$ be a topological space, $(Z,\lambda)$ a metrizable equiconnected space, $g:X\to Z$ a mapping and let, {at least} one of the following conditions {hold}:

  $(1)$ \,\,$CC(X\times X,Z)\subseteq B_1(X^2,Z)$;

  $(2)$\,\,$X$ is a metrically quarter-stratifiable strongly countably-dimensional paracompact space;

  $(3)$\,\, $X$ is a metrically quarter-stratifiable space and  $Z$ is locally convex;

  $(4)$\,\, $X$ is a metrizable space.

  \noindent Then the following conditions are equivalent:

 $(i)$\,\,\,$g\in B_1(X,Z)$;

 $(ii)$\,\,\,there exists a separately continuous mapping $f:X^2\to Z$ with the diagonal $g$.
\end{theorem}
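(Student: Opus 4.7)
The plan is to prove the two directions separately. The implication $(i)\Rightarrow(ii)$ is immediate from Corollary~\ref{cor:3.2}: given $g\in B_1(X,Z)$ and $Z$ a metrizable equiconnected space, there already exists a separately continuous $f:X^2\to Z$ with diagonal $g$. Note that this direction uses none of the four hypotheses (1)--(4); only the standing assumption that $Z$ is metrizable equiconnected is needed.

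For $(ii)\Rightarrow(i)$ I would reduce the whole question to the inclusion
$$CC(X\times X, Z)\subseteq B_1(X^2,Z).$$
The reduction is short and is the only argument actually carried out in the proof: if $f\in CC(X^2,Z)$ satisfies $g(x)=f(x,x)$ and $(f_n)$ is a sequence of continuous maps $X^2\to Z$ converging pointwise to $f$, then the maps $g_n:X\to Z$ defined by $g_n(x)=f_n(x,x)$ are continuous (as compositions of $f_n$ with the continuous diagonal embedding $x\mapsto(x,x)$) and $g_n(x)\to f(x,x)=g(x)$ pointwise, which places $g$ in $B_1(X,Z)$. So it suffices to verify $CC(X^2,Z)\subseteq B_1(X^2,Z)$ under each of (1)--(4).

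Under (1) this inclusion is precisely the hypothesis. Under (4) it is a classical Baire--Lebesgue--Hahn-type result for metrizable domains, extended to metrizable equiconnected targets via the partition-of-unity method of Rudin mentioned in the introduction. Under (2) and (3) the inclusion is known from the literature on Baire classification of separately continuous mappings on metrically quarter-stratifiable spaces (references \cite{Mo,Ru,Ve,MMMS,B,Bur,MMS1}). The main obstacle is really the verification of $CC\subseteq B_1$ under hypotheses (2), (3), (4); each of these is a substantial theorem of independent interest which I would invoke as a black box rather than reprove. Once they are in hand, the equivalence follows by the two lines above plus Corollary~\ref{cor:3.2}.
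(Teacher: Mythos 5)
Your proposal is correct and follows essentially the same route as the paper: $(i)\Rightarrow(ii)$ via Corollary~\ref{cor:3.2}, and $(ii)\Rightarrow(i)$ by reducing everything to the inclusion $CC(X\times X,Z)\subseteq B_1(X^2,Z)$, which under $(2)$--$(4)$ is quoted from the literature (your explicit composition-with-the-diagonal argument is exactly the reduction the paper leaves implicit). The only cosmetic difference is the source for case $(4)$: the paper observes that an equiconnected space is locally arcwise connected and cites \cite{Ka}, rather than a Rudin-style partition-of-unity argument.
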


\begin{proof}
$(1)$. The implication $(i)\Rightarrow (ii)$ follows from Corollary~\ref{cor:3.2}, and the implication $(ii)\Rightarrow (i)$ follows from the inclusion $CC(X\times X,Z)\subseteq B_1(X^2,Z)$.

$(2)$ and $(3)$. In the case when  $X$ is a metrically quarter-stratifiable strongly countably-dimensional paracompact space or $X$ is a metrically quarter-stratifiable space and $Z$ is a locally convex equiconnected space (see definitions in~\cite{B}), the inclusion \mbox{$CC(X\times X,Z)\subseteq B_1(X^2,Z)$} follows from \cite[Corollary~5.2]{B}.

$(4)$. Notice that an equiconnected space $Z$ is locally arcwise connected. Therefore, by \cite{Ka}, the inclusion $CC(X\times X,Z)\subseteq B_1(X^2,Z)$ holds.

\end{proof}

The conditions $(1)$ -- $(4)$ of Theorem \ref{th:3.1} suggest the following question.

\begin{question}\label{qu:3.7}
  Let $X$ be a quarter-stratifiable space and let $Z$ be a metrizable path-connected space. Does the inclusion $CC(X\times X,Z)\subseteq B_1(X^2,Z)$ hold?
\end{question}

Let us remark that in the previous theorem the Baire classification of diagonals follows from the stronger inclusion $CC\subseteq B_1$. Hence, naturally the following question appears.
\begin{question}\label{qu:3.6}
  Let $X$ and $Z$ be topological spaces such that {the} diagonal of every mapping  $f\in CC(X\times X,Z)$ is of the first Baire class. Does the inclusion $CC(X\times X,Z)\subseteq B_1(X^2,Z)$ hold?
\end{question}

\section{Necessary conditions on diagonals of $CL$-mappings}

Let  $X$ and $Z$ be metric spaces.  A mapping $f:X\to Z$ is called {\it Lipschitz} if the Lipschitz constant $${\rm Lip}(f)=\sup\limits_{x,y\in X, x\ne y}\frac{|f(x)-f(y)|_Z}{|x-y|_X}$$ is finite. We say that {\it $f$ is Lipschitz with a constant $C$} if $C\geq {\rm Lip}(f)$. A mapping $f:X\to Z$ is {\it Lipschitz on a set $A\subseteq X$ (with a constant $C\geq 0$)} if $f|_A$ is Lipschitz (with a constant $C$). The set of all Lipschitz mappings $f:X\to Z$ we denote by  $L(X,Z)$.

A mapping $f:X\to Z$ is called  {\it Lipschitz at a point $x\in X$} if the pointwise Lipschitz constant $${\rm Lip}_x(f)=\sup\limits_{y\in X, y\ne x}\frac{|f(x)-f(y)|_Z}{|x-y|_X}$$ is finite. We say that {\it $f$ is pointsise Lipschitz} if $f$ is Lipschitz at every point $x\in X$.

\begin{proposition}\label{pr:4.2}
  Let $X$ be a topological space, $Y, Z$ metric spaces, $g:X\to Y$ a continuous mapping, $C,\delta >0$ and $f:X\times Y\to Z$ a mapping which is continuous with respect to the {first variable} and such that for any $x\in X$ and $y\in Y$ with $|y-g(x)|_Y<\delta$ the inequality $$|f(x,y)-f(x,g(x))|_Z\leq C|y-g(x)|_Y$$ holds. Then the mapping $h:X\to Z$, $h(x)=f(x,g(x))$, is continuous.
\end{proposition}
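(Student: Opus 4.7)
The plan is to show continuity of $h$ at an arbitrary point $x_0\in X$ by the standard triangle-inequality trick: insert the intermediate value $f(x,g(x_0))$ and bound the two resulting differences using, respectively, the pointwise Lipschitz-type hypothesis and the continuity of $f$ in the first variable.

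Fix $x_0\in X$ and $\varepsilon>0$, and write
$$
|h(x)-h(x_0)|_Z\le |f(x,g(x))-f(x,g(x_0))|_Z + |f(x,g(x_0))-f(x_0,g(x_0))|_Z.
$$
For the second summand, the map $f^{x_0}$ (or rather $f_{g(x_0)}$ in the notation of Section~2) is continuous at $x_0$ as a function of the first variable, so there is a neighborhood $U_2$ of $x_0$ with $|f(x,g(x_0))-f(x_0,g(x_0))|_Z<\varepsilon/2$ for $x\in U_2$. For the first summand, I would like to apply the hypothesis with $y=g(x_0)$, which is legitimate provided $|g(x_0)-g(x)|_Y<\delta$. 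By continuity of $g$ at $x_0$, there is a neighborhood $U_1$ of $x_0$ such that $|g(x)-g(x_0)|_Y<\min\{\delta,\varepsilon/(2C)\}$ for every $x\in U_1$. Then for $x\in U_1$ the hypothesis gives
$$
|f(x,g(x_0))-f(x,g(x))|_Z \le C|g(x_0)-g(x)|_Y < \varepsilon/2.
$$
Combining both estimates for $x\in U_1\cap U_2$ yields $|h(x)-h(x_0)|_Z<\varepsilon$, proving continuity.

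The only subtlety worth flagging is that the Lipschitz-type inequality in the hypothesis is anchored at the moving point $g(x)$ rather than at a fixed second argument; at first glance one might want to anchor it at $g(x_0)$. The trick is that by continuity of $g$ the point $g(x_0)$ lies in the $\delta$-ball around $g(x)$ whenever $x$ is close enough to $x_0$, so the hypothesis applies with $y:=g(x_0)$, and the inequality $|g(x)-g(x_0)|_Y=|g(x_0)-g(x)|_Y$ lets us absorb the constant $C$ by shrinking the neighborhood via continuity of $g$. No further structure on $X$ (such as a metric) is needed, which is consistent with how the statement is formulated.
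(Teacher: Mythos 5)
Your proof is correct and follows essentially the same route as the paper: the same triangle-inequality decomposition through $f(x,g(x_0))$, with the second term controlled by continuity of $f_{g(x_0)}$ and the first by applying the hypothesis at the point $x$ with $y:=g(x_0)$, after shrinking the neighborhood so that $|g(x)-g(x_0)|_Y<\min\{\delta,\varepsilon/(2C)\}$. The subtlety you flag about the inequality being anchored at the moving point $g(x)$ is exactly the point the paper's argument exploits, and you handle it the same way.
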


\begin{proof}
 Fix $x_0\in X$ and $\varepsilon>0$. Denote $y_0=g(x_0)$ and $z_0=f(x_0,y_0)=h(x_0)$. Using the continuity of {the} mappings $g$ and $f_{y_0}$ at the point  $x_0$ we choose a neighborhood $U$ of $x_0$ such that $|g(x)-y_0|_Y<{\rm min}\{\frac{\varepsilon}{2C}, \delta\}$ and $|f(x,y_0)-z_0|_Z<\frac{\varepsilon}{2}$ for every $x\in U$. Now for every $x\in U$, using the hypothesis of the proposition, we conclude that  $$|h(x)-z_0)|_Z=|f(x,g(x))-z_0|_Z\leq |f(x,g(x))-f(x,y_0)|_Z+|f(x,y_0)-z_0|_Z<$$
$$< C|g(x)-y_0|_Y+\frac{\varepsilon}{2}<\frac{\varepsilon}{2}+\frac{\varepsilon}{2}=\varepsilon.$$
Hence, $h$ is continuous at the point $x_0$.
\end{proof}

Let  $X$ and $Y$ be topological spaces. A mapping $f:X\to Y$ is said to be {\it $\sigma$-continuous} (see \cite{BKMM}), if there exists a sequence  $(A_n)_{n=1}^{\infty}$ of closed sets $A_n\subseteq X$ such that all the restrictions  $f|_{A_n}$ are continuous. In \cite{BaBo} such functions are called {\it piecewise continuous}.

The following result for $CL$-mappings on a product of a Baire and a metric spaces was, in fact, established in~\cite[Theorem~2]{HMM}.

\begin{theorem}\label{th:4.3}
   Let $X$ be a topological space, $Y, Z$ metric spaces, $g:X\to Y$ a continuous mapping and $f:X\times Y\to Z$ a mapping which is continuous with respect to {the first variable} and pointwise Lipschitz with respect to {the second variable} at every point of the set  $E=\{(x,g(x)):x\in X\}$. The the mapping $h:X\to Z$, $h(x)=f(x,g(x))$, is $\sigma$-continuous.
\end{theorem}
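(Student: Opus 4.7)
The plan is to exhibit $h$ as piecewise continuous by producing a countable cover of $X$ by closed sets on which $h$ is continuous. For each $n\in\mathbb{N}$ I set
\[
A_n=\{x\in X:\ |f(x,y)-f(x,g(x))|_Z\leq n\,|y-g(x)|_Y\ \text{for every}\ y\in Y\}.
\]
The pointwise Lipschitz hypothesis supplies, for every $x_0\in X$, a finite constant $C_{x_0}$ with $|f(x_0,y)-f(x_0,g(x_0))|_Z\leq C_{x_0}|y-g(x_0)|_Y$ for all $y$, so $x_0\in A_n$ for any integer $n\geq C_{x_0}$. Hence $X=\bigcup_{n=1}^{\infty}A_n$.

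The main obstacle is showing that each $A_n$ is closed, because the map $x\mapsto f(x,g(x))$ appearing in the defining inequality is a priori not continuous (that is, after all, the conclusion to be proved). The trick is to feed the auxiliary value $y=g(x_0)$ into the inequality. Let $x_0\in\overline{A_n}$ and let $(x_\alpha)$ be a net in $A_n$ converging to $x_0$. Applying the defining inequality at $x_\alpha$ with $y=g(x_0)$ gives
\[
|f(x_\alpha,g(x_0))-f(x_\alpha,g(x_\alpha))|_Z\leq n\,|g(x_0)-g(x_\alpha)|_Y,
\]
whose right-hand side tends to $0$ by continuity of $g$. Since $f(x_\alpha,g(x_0))\to f(x_0,g(x_0))$ by continuity of $f$ in the first variable, the triangle inequality forces $f(x_\alpha,g(x_\alpha))\to f(x_0,g(x_0))$. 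Now fix an arbitrary $y\in Y$ and pass to the limit in $|f(x_\alpha,y)-f(x_\alpha,g(x_\alpha))|_Z\leq n|y-g(x_\alpha)|_Y$, using continuity of $f$ in the first variable, continuity of $g$, and the convergence just obtained: this yields $|f(x_0,y)-f(x_0,g(x_0))|_Z\leq n|y-g(x_0)|_Y$, so $x_0\in A_n$.

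Finally, Proposition~\ref{pr:4.2} applied to the restriction $f|_{A_n\times Y}$ (with constant $C=n$ and any $\delta>0$, since the defining Lipschitz inequality of $A_n$ holds for \emph{all} $y\in Y$) shows that $h|_{A_n}$ is continuous. Since the closed sets $A_n$ cover $X$ and $h$ is continuous on each of them, $h$ is $\sigma$-continuous.
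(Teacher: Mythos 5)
Your proof is correct and follows the same skeleton as the paper's --- cover $X$ by countably many closed sets on which $f^x$ satisfies a uniform Lipschitz estimate around $g(x)$, then invoke Proposition~\ref{pr:4.2} on each piece --- but the key technical step is handled differently. You define $A_n$ by requiring the inequality for \emph{all} $y\in Y$ and prove that $A_n$ itself is closed, via the neat device of first plugging $y=g(x_0)$ into the inequality along a net $x_\alpha\to x_0$ to obtain $f(x_\alpha,g(x_\alpha))\to f(x_0,g(x_0))$, and only then passing to the limit for an arbitrary $y$; this argument is sound. The paper instead defines $A_n$ by the inequality only for $y$ with $|y-g(x)|_Y<\frac1n$; since such a set need not be closed, it proves the weaker containment $\overline{A_n}\subseteq A_{2n}$ (at the cost of doubling the constant) by a contrapositive neighborhood argument and then takes $F_n=\overline{A_n}$. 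Your version is shorter and cleaner, but be aware that it leans on the paper's literal definition of ${\rm Lip}_x$ as a supremum over \emph{all} $y\in Y$: if one reads ``pointwise Lipschitz at $(x,g(x))$'' in the more common local sense (the estimate holding only for $y$ in some neighborhood of $g(x)$), your sets $A_n$ no longer necessarily cover $X$, whereas the paper's argument survives unchanged. Under the definition actually stated in the paper, both proofs are valid.
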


\begin{proof}
For every $n\in\mathbb N$ we denote by $A_n$ the set of all points $x\in X$ such that for every $y\in Y$ with $|y-g(x)|_Y<\frac{1}{n}$ the inequality $$|f(x,y)-f(x,g(x))|_Z\leq n|y-g(x)|_Y$$ holds. We show that $\overline{A_n}\subseteq A_{2n}$ for every $n\in\mathbb N$.

To obtain a contradiction, suppose that  $x_0\not\in A_{2n}$. We show that $x_0\not\in \overline{A_{n}}$. We choose $y\in Y$ such that $|y-y_0|_Y<\frac{1}{2n}$ and $|f(x_0,y)-f(x_0,y_0)|_Z>2n|y-y_0|_Y$, where $y_0=g(x_0)$. The continuity of the mapping $\varphi(x)=\frac{|f(x,y)-f(x,y_0)|_Z}{|y-g(x)|_Y}$ at $x_0$ implies the existence of a neighborhood  $U_1$ of  $x_0$ such that $\varphi(x)>2n$ for every $x\in U_1$. Notice that $|y-y_0|_Y>0$.
Therefore, the continuity of the mapping $\psi(x)=|g(x)-y_0|_Y-|y-g(x)|_Y$ at $x_0$ and the condition  $\psi(x_0)<0$ imply the existence of a neighborhood $U_2$ of  $x_0$ such that $|g(x)-y_0|_Y< |y-g(x)|_Y$ and $|g(x)-y_0|_Y<\frac{1}{2n}$ for every $x\in U_2$. We set $U_0=U_1\cap U_2$ and prove that $U_0\cap A_n=\emptyset$.

 Let $x\in U_0$. Since $|f(x,y)-f(x,y_0)|_Z\leq |f(x,y)-f(x,g(x))|_Z+|f(x,g(x))-f(x,y_0)|_Z$, there exists $\tilde{y}\in\{y, y_0\}$ such that $|f(x,\tilde{y})-f(x,g(x))|_Z\geq \frac{1}{2}|f(x,y)-f(x,y_0)|_Z$. Remark that $|y-g(x)|_Y\leq |y-y_0|_Y+ |y_0-g(x)|_Y<\frac{1}{n}$ and $|y_0-g(x)|_Y<\frac{1}{2n}<\frac{1}{n}$. Hence, $|\tilde{y}-g(x)|_Y<\frac{1}{n}$ and $$\frac{|f(x,\tilde{y})-f(x,g(x))|_Z}{|\tilde{y}-g(x)|_Y}\geq \frac{1}{2}\frac{|f(x,y)-f(x,y_0)|_Z}{|y-g(x)|_Y}=\frac{1}{2}\varphi(x)>n.$$
Therefore, $U_0\cap A_n=\emptyset$ and $\overline{A_n}\subseteq A_{2n}$.

For every $n\in\mathbb N$ let $F_n=\overline{A_n}$. Since $F_n\subseteq A_{2n}$, Proposition~\ref{pr:4.2} implies the continuity of $h|_{F_n}$. It remains to observe that  $\bigcup\limits_{n=1}^{\infty}F_n=X$.
\end{proof}

\begin{corollary}\label{cor:5.1}
 Let $X, Z$ be metric spaces and let $f:X^2\to Z$ be a mapping which is continuous with respect to the first variable and pointwise Lipschitz with respect to the second one at every point of the diagonal  $\Delta=\{(x,x):x\in X\}$. Then the mapping $g:X\to Z$, $g(x)=f(x,x)$, is $\sigma$-continuous.
\end{corollary}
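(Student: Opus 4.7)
The plan is to deduce this corollary directly from Theorem~\ref{th:4.3} by specializing $Y$ to $X$ and taking the continuous selector in Theorem~\ref{th:4.3} to be the identity on $X$. Since the letter $g$ plays two different roles in the two statements (the selector in the theorem, the diagonal in the corollary), I would work with a renamed selector $\gamma := \mathrm{id}_X$ to keep the bookkeeping clean.

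With this substitution, the set $E = \{(x,\gamma(x)) : x \in X\}$ appearing in Theorem~\ref{th:4.3} becomes exactly the diagonal $\Delta$, and the hypotheses of Corollary~\ref{cor:5.1} — namely that $f$ is continuous in the first variable and pointwise Lipschitz in the second at every point of $\Delta$ — coincide with the hypotheses required by Theorem~\ref{th:4.3} at points of $E$. The selector $\gamma$ is continuous, and $X$, $Y = X$, $Z$ are all metric, so the remaining structural hypotheses of the theorem are immediate.

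Invoking Theorem~\ref{th:4.3} then yields that the composition $h(x) = f(x, \gamma(x)) = f(x, x)$ is $\sigma$-continuous, which is precisely the statement of the corollary. I do not anticipate any substantive obstacle here: the corollary is essentially a repackaging of Theorem~\ref{th:4.3} in the case where the ``second coordinate'' is selected along the diagonal, so the only point requiring care is the notational overlap already addressed by writing the selector as $\gamma$ rather than $g$.
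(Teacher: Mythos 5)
Your proposal is correct and is exactly the intended derivation: the paper states Corollary~\ref{cor:5.1} without proof, as an immediate specialization of Theorem~\ref{th:4.3} to $Y=X$ with the identity as the continuous selector, so that $E$ becomes $\Delta$ and $h(x)=f(x,x)$. Your careful handling of the notational clash between the selector and the diagonal is a sensible touch but introduces no new mathematics.
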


\section{Characterization of diagonals of $CL$-mappings}

We first prove several auxiliary facts.

\begin{lemma}\label{lm:4.6}
  Let $X$ be a metric space and $G$ an open set in $X^2$ such that $\Delta=\{(x,x):x\in X\}\subseteq G$. Then there exists a continuous function  $g:X\to (0,+\infty)$ such that  $\{(x,y)\in X^2:|x-y|_X<g(x)\} \subseteq G$.
  \end{lemma}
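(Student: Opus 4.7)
The plan is to build $g$ from the distance of the diagonal point $(x,x)$ to the closed complement $X^{2}\setminus G$, using a suitable product metric on $X^{2}$.

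First I would fix on $X^{2}$ the metric $d((x_{1},y_{1}),(x_{2},y_{2}))=\max\{|x_{1}-x_{2}|_{X},|y_{1}-y_{2}|_{X}\}$, which generates the product topology. Set $F=X^{2}\setminus G$. If $F=\emptyset$ the constant function $g\equiv 1$ clearly works, so assume $F\ne\emptyset$. Then define
\[
g(x)=\tfrac{1}{2}\,d\bigl((x,x),F\bigr).
\]
The map $x\mapsto(x,x)$ is continuous (in fact $1$-Lipschitz) from $X$ to $(X^{2},d)$, and the distance function $p\mapsto d(p,F)$ is $1$-Lipschitz, so $g$ is continuous.

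Next I would check positivity. For any $x\in X$ we have $(x,x)\in G$, and since $G$ is open in $X^{2}$ there exists $\varepsilon>0$ with the $d$-ball $B_{d}((x,x),\varepsilon)\subseteq G$; this forces $d((x,x),F)\ge\varepsilon>0$, so $g(x)>0$. Hence $g\colon X\to(0,+\infty)$.

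Finally I would verify the inclusion. Suppose $(x,y)\in X^{2}$ satisfies $|x-y|_{X}<g(x)$. If $(x,y)$ were in $F$, then
\[
d\bigl((x,x),F\bigr)\le d\bigl((x,x),(x,y)\bigr)=\max\{0,|x-y|_{X}\}=|x-y|_{X}<g(x)=\tfrac{1}{2}d\bigl((x,x),F\bigr),
\]
which is impossible. Therefore $(x,y)\in G$, as required.

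There is no serious obstacle here: the only point worth flagging is the choice of the product metric (so that $d((x,x),(x,y))=|x-y|_{X}$ exactly, avoiding any $\sqrt{2}$-type factor that would complicate the inequality), and the trivial separation of the case $G=X^{2}$ so that the distance to $F$ is well-defined and finite.
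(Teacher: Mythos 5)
Your proof is correct, but it takes a genuinely different route from the paper. The paper proves this lemma by choosing, for each $x$, a neighbourhood $U_x$ and a radius $\varepsilon_x$ witnessing openness of $G$ along the diagonal, then passing to a locally finite open refinement and a subordinated partition of unity $(\varphi_i)$ to glue the local radii into a single continuous function $g(x)=\sum_i\delta_i\varphi_i(x)$; this is the standard paracompactness argument and would adapt to settings where no canonical global formula is available. You instead exploit the metric directly: with the max metric $d$ on $X^2$ you set $g(x)=\tfrac12\,d\bigl((x,x),X^2\setminus G\bigr)$, and the three required properties (continuity, positivity, and the inclusion) each follow in one line from the $1$-Lipschitz property of distance functions, the openness of $G$, and the identity $d((x,x),(x,y))=|x-y|_X$. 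Your argument is shorter and more elementary, avoids partitions of unity entirely, and even yields a Lipschitz $g$ rather than merely a continuous one, which is more than the lemma asks for and is harmless for its later uses (Lemma~\ref{lm:4.7} and Lemma~\ref{lm:7.1} only need continuity). The separate treatment of the case $G=X^2$, and the choice of the max metric so that no $\sqrt2$ factor intrudes, are exactly the right points to flag; in fact the factor $\tfrac12$ is not even needed, since $|x-y|_X<d((x,x),F)$ already forces $(x,y)\notin F$.
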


\begin{proof}
For every point $x\in X$ we choose a neighborhood $U_x$ of $x$ in $X$ and a number $\varepsilon_x>0$ such that for any $y\in U_x$ the inclusion $$\{(x',y')\in X^2:|x'-y|_X<\varepsilon_x, |y'-y|_X<\varepsilon_x\}\subseteq G$$ holds. Let $(V_i:i\in I)$ be a locally finite open cover of $X$ which refines  $(U_x:x\in X)$, and $(\varphi_i:i\in I)$ a locally finite partition of unity on  $X$ subordinated to $(V_i:i\in I)$. For every $i\in I$ we choose $x\in X$ such that $V_i\subseteq U_x$ and let $\delta_i=\varepsilon_x$. It is easy to verify that the function $g:X\to (0,+\infty)$, $g(x)=\sum\limits_{i\in I}\delta_i \varphi_i(x)$, is to be found.
\end{proof}

\begin{lemma}\label{lm:4.7}
  Let $X$ be a metric space, $(H_n)_{n=1}^{\infty}$ a sequence of open in $X^2$ sets $H_n$ such that $\Delta=\{(x,x):x\in X\}\subseteq H_{n}$ for every $n\in\mathbb N$. Then there exist a sequences $(G_n)_{n=1}^{\infty}$ and $(F_n)_{n=1}^{\infty}$ of open sets $G_n$ and closed sets $F_n$ in $X^2$ and sequences $(\varphi_n)_{n=1}^{\infty}$ and $(\delta_n)_{n=1}^{\infty}$ of separately Lipschitz functions $\varphi_n:X^2\to [0,1]$ and positive functions  $\delta_n:X\to (0,+\infty)$ with the following conditions:

  $(1)$\,\,\, $\Delta\subseteq G_{n+1}\subseteq F_n\subseteq G_n\subseteq H_n$ for every $n\in\mathbb N$;

  $(2)$\,\,\, $X^2\setminus G_n\subseteq\varphi_n^{-1}(0)$ and $F_n\subseteq \varphi_n^{-1}(1)$ for every $n\in\mathbb N$;

  $(3)$\,\,\, $|u-v|_X\geq \delta_n(x)$ for every $x\in X$, $n\in\mathbb N$ and $u,v\in X$ with $(x,u)\in G_{n+1}$ and $(x,v)\in X^2\setminus F_n$.
  \end{lemma}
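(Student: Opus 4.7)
The plan is to realize the sets $G_n$ and $F_n$ as nested tubes about $\Delta$ whose widths are governed by positive Lipschitz functions on $X$, and to obtain $\varphi_n$ as a clipped linear cutoff of $|x-y|_X$. Endow $X^2$ with the sup-product metric $d_{X^2}((x,y),(x',y'))=\max\{|x-x'|_X,\,|y-y'|_X\}$ and set
\[
a_n(x)=\min\bigl(1,\, d_{X^2}\bigl((x,x),\, X^2\setminus H_n\bigr)\bigr).
\]
Since $H_n$ is open with $\Delta\subseteq H_n$, $a_n$ is strictly positive on $X$; distance to a closed set is $1$-Lipschitz and $x\mapsto(x,x)$ is $1$-Lipschitz in the sup metric, so $a_n$ is $1$-Lipschitz. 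The containment $\{(x,y)\in X^2 : |x-y|_X<a_n(x)\}\subseteq H_n$ follows because $|x-y|_X=d_{X^2}((x,x),(x,y))<a_n(x)$ forces $(x,y)\in H_n$. Passing to the truncated minima $a'_n(x)=4^{-n}\min\{a_1(x),\ldots,a_n(x)\}$ yields Lipschitz positive functions with $a'_{n+1}\le a'_n/4$.

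Set $b_n=a'_n/2$ and
\[
G_n=\{(x,y)\in X^2 : |x-y|_X<a'_n(x)\},\qquad F_n=\{(x,y)\in X^2 : |x-y|_X\le b_n(x)\},
\]
so $G_n$ is open and $F_n$ is closed, and the chain $\Delta\subseteq G_{n+1}\subseteq F_n\subseteq G_n\subseteq H_n$ follows at once from $a'_{n+1}\le a'_n/4<b_n<a'_n\le a_n$, establishing~(1). Put
\[
\varphi_n(x,y)=\max\!\Bigl(0,\,\min\!\Bigl(1,\,2-\frac{2|x-y|_X}{a'_n(x)}\Bigr)\Bigr),
\]
so that $\varphi_n\equiv1$ on $F_n$ and $\varphi_n\equiv0$ off $G_n$, which is~(2). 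For~(3) take $\delta_n(x)=b_n(x)-a'_{n+1}(x)\ge a'_n(x)/4>0$; if $(x,u)\in G_{n+1}$ and $(x,v)\notin F_n$, the triangle inequality gives $|u-v|_X\ge|x-v|_X-|x-u|_X>b_n(x)-a'_{n+1}(x)=\delta_n(x)$.

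The step requiring the most care is the separate Lipschitzness of $\varphi_n$. For fixed $x$, $y\mapsto\varphi_n(x,y)$ is a clipped affine function of the $1$-Lipschitz map $y\mapsto|x-y|_X$ and is therefore $(2/a'_n(x))$-Lipschitz. For fixed $y$ the denominator $a'_n(x)$ can a priori shrink as $x$ varies, but on the support $S_y=\{x : |x-y|_X<a'_n(x)\}$ the Lipschitzness of $a'_n$ gives $a'_n(y)\le a'_n(x)+|x-y|_X<2a'_n(x)$, so $a'_n(x)>a'_n(y)/2$, providing a uniform lower bound on the denominator depending only on $y$. A short case analysis (using that $\varphi_n$ is continuous and vanishes outside $S_y$) then yields a global Lipschitz constant of order $1/a'_n(y)$ for $x\mapsto\varphi_n(x,y)$. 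Everything else reduces to the triangle inequality and continuity of $a'_n$.
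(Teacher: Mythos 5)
Your construction is correct, and it shares the paper's basic architecture (nested tubes $G_n=\{(x,y):|x-y|_X<r_n(x)\}$, $F_n=\{(x,y):|x-y|_X\le r_n(x)/2\}$ around the diagonal, with cutoffs driven by $|x-y|_X$ measured against the radius), but it differs in the two technically substantive steps. First, the paper obtains its radius functions from Lemma~\ref{lm:4.6}, i.e.\ via a locally finite partition of unity, which yields only \emph{continuous} gauges $g_n$; you instead take truncated distance functions $a_n(x)=\min(1,d_{X^2}((x,x),X^2\setminus H_n))$, which are automatically $1$-Lipschitz, so Lemma~\ref{lm:4.6} is not needed at all and the Lipschitz bound $a'_n(y)<2a'_n(x)$ on the support becomes available for free. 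Second, for $\varphi_n$ the paper uses the Urysohn-type quotient $\beta/(\alpha+\beta)$ of distances to $F_n$ and to $X^2\setminus G_n$ and must prove the lower bound $\max\{\alpha,\beta\}\ge\varepsilon$ on each section; your clipped affine function of $|x-y|_X/a'_n(x)$ is more explicit and makes condition $(2)$ and the Lipschitzness of the second-variable sections immediate. Both routes produce sections that are Lipschitz with a constant depending on the fixed point, which is all that is used later, so the proofs are interchangeable; yours is shorter and self-contained, the paper's works with merely continuous gauges.

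One spot where your justification is thinner than it should be: in the first-variable sections, the parenthetical ``using that $\varphi_n$ is continuous and vanishes outside $S_y$'' does not by itself give a global Lipschitz constant in a general metric space (there are no segments to walk along to the boundary of $S_y$, and continuity plus vanishing outside an open set does not control the constant). What actually closes the mixed case $x\in S_y$, $x'\notin S_y$ is the quantitative decay from your formula: $\varphi_n(x,y)\le \frac{2}{a'_n(x)}\bigl(a'_n(x)-|x-y|_X\bigr)\le\frac{2}{c}\max\bigl(0,a'_n(x)-|x-y|_X\bigr)$ with $c=a'_n(y)/2$, and $x\mapsto\max(0,a'_n(x)-|x-y|_X)$ is globally $2$-Lipschitz and vanishes at $x'$, giving $\varphi_n(x,y)\le\frac{4}{c}|x-x'|_X$. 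With that line inserted, the case analysis is complete and the lemma is proved.
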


\begin{proof} Using Lemma~\ref{lm:4.6} we construct a sequence $(g_n)_{n=1}^{\infty}$ of continuous functions $g_n:X\to (0,+\infty)$ such that $g_{n+1}(x)<\frac{1}{16}g_n(x)$ for all $n\in\mathbb N$ and $x\in X$, and $G_n=\{(x,y)\in X^2:|x-y|_X<g_n(x)\} \subseteq H_n$ for every $n\in\mathbb N$. Clearly, the sequences $(G_n)_{n=1}^{\infty}$ and $(F_n)_{n=1}^{\infty}$ of sets $F_n=\{(x,y)\in X^2:|x-y|_X\leq\frac{1}{8}g_n(x)\}$ satisfy condition~$(1)$. Moreover the sequence $(\delta_n)_{n=1}^{\infty}$ of function $\delta_n(x)=\frac{1}{16}g_n(x)$ satisfies condition~$(3)$.

Denote by  $|\cdot-\cdot|_{Z}$ the metric on $Z=X^2$ defined by the formula: $|z_1-z_2|_Z=\max\{|x_1-x_2|_X, |y_1-y_2|_X\}$, where $z_1=(x_1,y_1)$ and $z_2=(x_2,y_2)$. For any $n\in\mathbb N$ and $z\in Z$ let $\alpha(z)=|z-F_n|_Z$, $\beta(z)=|z-Z\setminus G_{n}|_Z$ and $\varphi_n(z)=\frac{\beta(z)}{\alpha(z)+\beta(z)}$. Evidently, all the functions $\varphi_n$ satisfy~$(2)$. It remains to show that all the functions $\varphi_n$ are separately Lipschitz.

Fix $n\in\mathbb N$ and $x_0\in X$. Using the continuity of $g_n$ at $x_0$, we choose a positive number $\varepsilon\leq \frac{g_n(x_0)}{8}$ such that $g_n(x)>\frac{7g_n(x_0)}{8}$  and $g_{n}(x)< 2 g_{n}(x_0)$ whenever $|x-x_0|_X<\varepsilon$. Prove that $\max\{\alpha(z),\beta(z)\}\geq \varepsilon$ for every $z=(x_0,y)\in Z$. Suppose that there exist $y\in X$, $z_1=(x_1,y_1)\in F_n$ and $z_2=(x_2,y_2)\in Z\setminus G_n$ such that $|z-z_1|_Z<\varepsilon$ and $|z-z_2|_Z<\varepsilon$, where $z=(x_0,y)$. Since $|x_0-x_1|_X<\varepsilon$ and $(x_1,y_1)\in F_n$, $|x_1-y_1|_X\leq \frac{1}{8}g_{n}(x_1)<\frac{1}{4}g_n(x_0)$. Analogously, since $|x_0-x_2|_X<\varepsilon$ and $(x_2,y_2)\in Z\setminus G_n$, $|x_2-y_2|_X\geq g_{n}(x_2)>\frac{7g_n(x_0)}{8}$. Now
$$|y_2-y_1|_X\geq |y_2-x_2|_X - |x_2-x_0|_X - |x_0-x_1|_X - |x_1-y_1|_X\geq\frac{3}{8}g_n(x_0).$$
But
$$|y_2-y_1|_X\leq |y_2-y|_X +|y-y_1|_X\leq\frac{1}{4}g_n(x_0),$$
a contradiction.

Hence, $\max\{\alpha(z),\beta(z)\}\geq \varepsilon$ for every $(x_0,y)\in Z$. Since the functions $\alpha$, $\beta$  and the function $\varphi:(0,+\infty)^2\setminus [0,\varepsilon]^2\to \mathbb R$, $\varphi(s,t)=\frac{t}{s+t}$, are Lipschitz and $\varphi_n(z)=\varphi(\alpha(z),\beta(z))$ for every $z=(x_0,y)$, the function $\varphi_n$ is Lipschitz with respect to the first variable.

Similarly one can prove that $\varphi_n$ is Lipschitz with respect to the second variable.
\end{proof}

We say that {\it $f:X\to Z$ is bounded} if there exists $C\geq 0$ such that \mbox{$|f(x)-f(y)|_Z\leq C$} for every $x,y\in X$.

\begin{proposition}\label{pr:4.0}
  Let  $X$ and $Z$ be metric spaces, let $f:X\to Z$ be a bounded mapping and let $A,B\subseteq X$ and $\delta>0$ be such that $X=A\cup B$, $f$ is Lipschitz on $A$ and on $B$ and $|x-y|_X\geq \delta$ for every $x\in X\setminus A$ and $y\in X\setminus B$. Then $f$ is Lipschitz.
\end{proposition}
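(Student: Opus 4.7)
The plan is a simple three-case analysis on the membership of the two input points in $A$ and $B$. Let $L_A$ and $L_B$ be Lipschitz constants for $f|_A$ and $f|_B$ respectively, and let $M\ge 0$ be a constant with $|f(u)-f(v)|_Z\le M$ for all $u,v\in X$. I claim that $f$ is Lipschitz with constant $L=\max\{L_A,L_B,M/\delta\}$.

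Fix $x,y\in X$. Since $X=A\cup B$, the pair $(x,y)$ falls into one of three (overlapping) cases. First, if $x,y\in A$, then directly $|f(x)-f(y)|_Z\le L_A|x-y|_X\le L|x-y|_X$, and symmetrically if $x,y\in B$. The remaining case, after possibly interchanging $x$ and $y$, is $x\in A\setminus B$ and $y\in B\setminus A$. Then $x\in X\setminus B$ and $y\in X\setminus A$, so the distance hypothesis applied to the pair $(y,x)\in (X\setminus A)\times(X\setminus B)$ yields $|x-y|_X\ge\delta$. Combined with boundedness this gives
\[
|f(x)-f(y)|_Z\le M=\frac{M}{\delta}\cdot\delta\le\frac{M}{\delta}|x-y|_X\le L|x-y|_X,
\]
which exhausts all cases.

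There is really no obstacle here: the hypothesis on $X\setminus A$ and $X\setminus B$ is tailored exactly to handle the one case in which $x$ and $y$ live on opposite ``sides'' of the decomposition, and in that case Lipschitzness is obtained for free from the boundedness of $f$ by absorbing the constant $M$ into $M/\delta$ using the guaranteed lower bound on $|x-y|_X$. The only thing to be slightly careful about is the symmetry of the bad case (swap $x$ and $y$ if necessary so that $x\in A$ and $y\in B$), which is immediate.
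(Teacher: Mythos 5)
Your proof is correct and is exactly the argument the paper has in mind: the paper simply asserts ${\rm Lip}(f)\leq \max\{{\rm Lip}(f|_A),{\rm Lip}(f|_B),\frac{C}{\delta}\}$ as ``easy to see,'' and your three-case analysis (both points in $A$, both in $B$, or one in $A\setminus B$ and the other in $B\setminus A$ forcing $|x-y|_X\geq\delta$) is precisely the verification of that bound.
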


\begin{proof}
Choose $C\geq 0$ such that $|f(y)-f(x)|_Z\leq C$ for every $x,y \in X$. It is easy to see that ${\rm Lip}(f)\leq \max\{{\rm Lip}(f|_A),{\rm Lip}(f|_B),\frac{C}{\delta}\}$.
 \end{proof}

A topological space $Z$ is called {\it an absolute extensor for a topological space $X$}, if for any closed set $A\subseteq X$ every continuous mapping  $g:A\to Z$ can be extended to a continuous mapping $f:X\to Z$.

Let $X$ and $Z$ be topological spaces. A mapping $f:X\to Z$ is said to be {\it a mapping of the stable first Baire class}, if there exists a sequence  $(f_n)^{\infty}_{n=1}$ of continuous mappings $f_n:X\to Z$ which pointwise stably converges to  $f$, i.e. for every $x\in X$ the sequence $(f_n(x))_{n=1}^{\infty}$ stably converges to $f(x)$. The collection of all mappings  $f:X\to Z$ of the stable first Baire class we denote by $B^d_1(X,Z)$.

\begin{theorem}\label{th:4.5}
  Let $X$ be a metric space, $(Z,\lambda)$ a metric equiconnected space, where $\lambda$ is Lipschitz with respect to third variable and $g\in B^d_1(X,Z)$. Then there exists a mapping $f:X^2\to Z$ with the diagonal $g$ which is continuous with respect to the first variable and Lipschitz with respect to the second one.
\end{theorem}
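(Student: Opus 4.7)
The plan is to reuse the formula~(\ref{eq:3.0}) from Theorem~\ref{th:3.1}, feeding it the ball-like sets and separately Lipschitz partition functions $\varphi_n$ supplied by Lemma~\ref{lm:4.7}, and to exploit stable convergence to ensure that, for each fixed $x_0$, the slice $f(x_0,\cdot)$ is eventually constant and has only finitely many nontrivial Lipschitz shells. First I would pick a sequence $(g_n)_{n=1}^{\infty}$ of continuous mappings $g_n:X\to Z$ such that $(g_n(x))$ stably converges to $g(x)$ for every $x\in X$, i.e.\ for each $x\in X$ there is $N(x)\in\mathbb{N}$ with $g_n(x)=g(x)$ for all $n\geq N(x)$. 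Set $H_n=\{(x,y)\in X^2:|g_k(x)-g_k(y)|_Z<\tfrac1n\text{ for every }k\leq n+1\}$; each $H_n$ is open and contains $\Delta$. Apply Lemma~\ref{lm:4.7} to $(H_n)$ to obtain $(G_n),(F_n),(\varphi_n),(\delta_n)$; from its proof $G_n=\{(x,y):|x-y|_X<\rho_n(x)\}$ with $\rho_{n+1}<\rho_n/16$, so $E:=\bigcap_n G_n=\Delta$. Taking $G_0=F_0=X^2$, define $f$ by~(\ref{eq:3.0}); then $f(x,x)=g(x)$.

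Continuity of $f$ in the first variable is obtained by quoting Theorem~\ref{th:3.1} itself: conditions~(1)--(2) come from Lemma~\ref{lm:4.7}, and since $G_n\subseteq H_n$, condition~(3) holds with the constant choice $\delta_n\equiv\tfrac1n$, which tends to zero. Hence $f$ is even separately continuous, which is more than needed.

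The main work is Lipschitzness in the second variable. Fix $x_0$ and put $N=N(x_0)$. For every $n\geq N$ property~(iii) of $\lambda$ forces $f(x_0,y)=\lambda(g(x_0),g(x_0),\varphi_n(x_0,y))=g(x_0)$ on the shell $F_{n-1}^{x_0}\setminus F_n^{x_0}$, so $f(x_0,\cdot)\equiv g(x_0)$ on $F_{N-1}^{x_0}$. On each of the finitely many outer shells $S_n=F_{n-1}^{x_0}\setminus F_n^{x_0}$ with $n<N$, the map $y\mapsto\lambda(g_n(x_0),g_{n+1}(x_0),\varphi_n(x_0,y))$ is Lipschitz because $\varphi_n(x_0,\cdot)$ is Lipschitz by Lemma~\ref{lm:4.7} and $\lambda$ is Lipschitz in the third variable. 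I would then show by downward induction on $k\in\{1,\ldots,N\}$ that $f(x_0,\cdot)$ is Lipschitz on $F_{k-1}^{x_0}$: the base case $k=N$ is the constancy observation, and the inductive step from $F_k^{x_0}$ to $F_{k-1}^{x_0}$ is an application of Proposition~\ref{pr:4.0} with $A=F_k^{x_0}$ and $B=F_{k-1}^{x_0}\setminus G_{k+1}^{x_0}$, whose ``outsides'' $S_k$ and $G_{k+1}^{x_0}$ are separated by $\delta_k(x_0)>0$ thanks to Lemma~\ref{lm:4.7}(3).

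The hard part will be verifying that $f(x_0,\cdot)$ is itself Lipschitz on $B$. The set $B$ splits into the shell $S_k$ and the ``flat'' region $F_k^{x_0}\setminus G_{k+1}^{x_0}$; on the latter $(x_0,y)\in F_k\setminus F_{k+1}$ with $\varphi_{k+1}(x_0,y)=0$, so $f(x_0,y)\equiv g_{k+1}(x_0)$. These two pieces share the boundary $\partial F_k^{x_0}$ with no geometric gap, so Proposition~\ref{pr:4.0} is unavailable between them. The way around is to observe that on $\partial F_k^{x_0}$ the outer formula also evaluates to $g_{k+1}(x_0)$ (because $\varphi_k=1$ on $F_k^{x_0}$), and then for $y_1\in S_k$ and $y_2\in F_k^{x_0}\setminus G_{k+1}^{x_0}$ to bound $|f(x_0,y_1)-f(x_0,y_2)|_Z$ by the Lipschitz constant of $\lambda$ in the third variable times $|1-\varphi_k(x_0,y_1)|\leq K_k(x_0)\,\mathrm{dist}(y_1,F_k^{x_0})\leq K_k(x_0)|y_1-y_2|_X$, where $K_k(x_0)$ is the Lipschitz constant of $\varphi_k(x_0,\cdot)$. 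Stable convergence is essential here: it makes the induction terminate after finitely many steps and keeps all (possibly $x_0$-dependent) Lipschitz constants finite.
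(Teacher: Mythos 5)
Your proposal follows essentially the same route as the paper: Lemma~\ref{lm:4.7} supplies the sets and separately Lipschitz functions $\varphi_n$ that are fed into the construction of Theorem~\ref{th:3.1}, stable convergence makes the slice $f(x_0,\cdot)$ constant from some shell onward, and Proposition~\ref{pr:4.0} glues the finitely many remaining Lipschitz pieces using the gaps $\delta_n(x_0)$. Two remarks. First, Proposition~\ref{pr:4.0} is stated for \emph{bounded} mappings (its cross-term estimate is $C/\delta$), and you never verify boundedness of $f(x_0,\cdot)$ on the sets where you apply it; the paper does this explicitly by observing that the image of each shell lies in the compact arc $\{\lambda(g_n(x_0),g_{n+1}(x_0),t):t\in[0,1]\}$ and that, by stability, only finitely many distinct arcs occur. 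This is an easy but genuinely missing step. Second, your ``hard part'' is an artifact of cutting along the disjoint shells $F_{k-1}^{x_0}\setminus F_k^{x_0}$: the single formula $y\mapsto\lambda(g_k(x_0),g_{k+1}(x_0),\varphi_k(x_0,y))$ is in fact valid on all of $\{y:(x_0,y)\in F_{k-1}\setminus G_{k+1}\}$ (on $F_k\setminus G_{k+1}$ one has $\varphi_k=1$ and $\varphi_{k+1}=0$, so both expressions equal $g_{k+1}(x_0)$), which is exactly the overlapping set $A_k$ the paper works with; Lipschitzness there is then immediate from the Lipschitzness of $\lambda$ in $t$ and of $\varphi_k(x_0,\cdot)$, with no boundary estimate needed. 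Your explicit estimate across $\partial F_k^{x_0}$ is nonetheless correct, so the argument goes through once the boundedness check is added.
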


\begin{proof}
Let $(g_n)^{\infty}_{n=1}$ be a sequence of continuous mappings $g_n:X\to Z$ which pointwise stably converges to $g$. Let $G_0=F_0=X^2$ and $$H_n=\{(x,y)\in X^2: |g_n(x)-g_n(y)|_Z<\frac1n,\,|g_{n+1}(x)-g_{n+1}(y)|_Z<\frac1n\}$$  for every $n\in\mathbb N$.
By Lemma~\ref{lm:4.7} there exist sequences $(G_n)_{n=1}^{\infty}$ and $(F_n)_{n=1}^{\infty}$ of open sets $G_n$ and closed sets $F_n$ in $X^2$ and sequences $(\varphi_n)_{n=1}^{\infty}$ and $(\delta_n)_{n=1}^{\infty}$ of separately Lipschitz functions $\varphi_n:X^2\to [0,1]$ and positive functions  $\delta_n:X\to (0,+\infty)$ which satisfy conditions $(1)$, $(2)$ and $(3)$ of Lemma~\ref{lm:4.7}. According to Theorem~\ref{th:3.1} the mapping $f:X^2\to Z$ defined by (\ref{eq:3.0}) is separately continuous. Clearly that $g$ the diagonal of $f$. It remains to verify that $f$ is Lipschitz with respect to the second variable.

Fix $x_0\in X$ and show that the function $h=f^{x_0}:X\to Z$, $h(y)=f(x_0,y)$, is Lipschitz. It follows from (\ref{eq:3.1}) that
\begin{equation}\label{eq:5.0}
h(y)=\lambda(g_n(x_0),g_{n+1}(x_0),\varphi_n(x_0,y))
\end{equation}
for every $n\in\mathbb N$ and $y\in A_n=\{y\in X: (x_0,y)\in F_{n-1}\setminus G_{n+1}\}$. Since for every $n\in\mathbb N$ the set $h(A_n)$ contained in the compact set $\{\lambda(g_n(x_0),g_{n+1}(x_0),t):t\in[0,1]\}$, the mapping $h|_{A_n}$ is bounded. {Using $(i)$, we} choose a number $n_0\in\mathbb N$ such that $g_n(x_0)=g(x_0)$ for all $n>n_0$. Then $h(X)=\bigcup\limits_{n=1}^{n_0}h(A_n)$ and $h$ is bounded.

Since $\lambda$ is Lipschitz with respect to the third variable and $\varphi_n$ is Lipschitz with respect to the second variable, it follows from (\ref{eq:5.0}) that $h$ is is Lipschitz on each set $A_n$. For every $n\in\mathbb N$ put $B_n=\bigcup\limits_{k=1}^{n}A_k$. Note that For every $n\in\mathbb N$ if $u\in A_{n+1}\setminus B_n$ and $v\in B_n\setminus A_{n+1}$ then $(x_0,u)\in G_{n+1}$, $(x_0,v)\in X^2\setminus F_n$ and $|u-v|_X\geq \delta_n(x_0)$ according to $(3)$ of Lemma~\ref{lm:4.7}. Now it easy to verify by induction on $n$ that according to Proposition \ref{pr:4.0} $h$ is Lipschitz on each set $B_n$. Moreover $h(y)=g(x_0)$ for every $y\in A=\{u\in X:(x_0,u)\in F_{n_0}\}$, in particular, $h$ is Lipschitz on $A$. Therefore by Proposition \ref{pr:4.0}, $h$ is Lipschitz on $X=A\cup B_{n_0}$.
\end{proof}

\begin{theorem}\label{th:4.6}
  Let $X$ be a metric space, $(Z,\lambda)$ a metric equiconnected space, where $\lambda$ is Lipschitz with respect to third variable and $Z$ is an absolute extensor for $X$, and let $g:X\to Z$ be a mapping. Then the following conditions are equivalent:

 $(i)$\,\,\,$g\in B^d_1(X,Z)$;

 $(ii)$\,\,\,there exists a mapping $f:X^2\to Z$ with the diagonal $g$ which is continuous with respect to the first variable and Lipschitz with respect to the second one;

 $(iii)$\,\,\,there exists a mapping $f:X^2\to Z$ with the diagonal $g$ which is continuous with respect to the first variable and pointwise Lipschitz with respect to the second variable at every point of the diagonal $\Delta=\{(x,x):x\in X\}$.
\end{theorem}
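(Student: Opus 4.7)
My plan is to prove the chain $(i) \Rightarrow (ii) \Rightarrow (iii) \Rightarrow (i)$, of which the first two implications are essentially free and the last is the substantive one.

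The implication $(i) \Rightarrow (ii)$ is just Theorem~\ref{th:4.5} verbatim, since the hypotheses there (metric $X$, metric equiconnected $(Z,\lambda)$ with $\lambda$ Lipschitz in the third variable, $g \in B^d_1(X,Z)$) are exactly what we have. For $(ii) \Rightarrow (iii)$ it suffices to note that a Lipschitz mapping is pointwise Lipschitz at every point; so any $f$ satisfying $(ii)$ automatically satisfies the weaker pointwise condition on the diagonal required by $(iii)$.

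The core of the argument is $(iii) \Rightarrow (i)$. Given $f$ as in $(iii)$, Corollary~\ref{cor:5.1} immediately yields that $g$ is $\sigma$-continuous: there exists a sequence $(F_n)_{n=1}^{\infty}$ of closed subsets of $X$ with $X = \bigcup_{n=1}^{\infty} F_n$ and each $g|_{F_n}$ continuous. Replacing $F_n$ by $\widetilde F_n = \bigcup_{k=1}^{n} F_k$, we obtain an increasing sequence of closed sets whose union is $X$, and the pasting lemma for finitely many closed sets ensures that $g|_{\widetilde F_n}$ is still continuous. Because $Z$ is an absolute extensor for $X$, each $g|_{\widetilde F_n}$ extends to a continuous mapping $g_n : X \to Z$.

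It remains to check that $g_n \to g$ stably, which is automatic: every $x \in X$ belongs to $\widetilde F_{n_0}$ for some $n_0$, and therefore $g_n(x) = g(x)$ for all $n \geq n_0$. Hence $g \in B^d_1(X,Z)$, completing the equivalence. I expect the only delicate point to be the appeal to the absolute extensor property, which must be applied to a closed subspace of $X$ (ensuring $\widetilde F_n$ is closed is exactly why we took finite unions rather than countable ones); everything else is a direct consequence of results already established.
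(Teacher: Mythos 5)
Your proposal is correct, and the first two implications coincide with the paper's treatment: $(i)\Rightarrow(ii)$ is Theorem~\ref{th:4.5} and $(ii)\Rightarrow(iii)$ is immediate. The difference lies in $(iii)\Rightarrow(i)$. Both you and the paper begin by invoking Corollary~\ref{cor:5.1} to get $\sigma$-continuity of $g$, but the paper then finishes in one line by citing an external result (Theorem~6.3 of Banakh--Bokalo) to pass from $\sigma$-continuity to membership in $B^d_1(X,Z)$, whereas you unpack that step explicitly: you replace the closed sets $F_n$ by the increasing closed sets $\widetilde F_n=\bigcup_{k=1}^n F_k$, note that $g|_{\widetilde F_n}$ remains continuous by the finite pasting lemma, extend each $g|_{\widetilde F_n}$ to a continuous $g_n:X\to Z$ using the absolute extensor hypothesis, and observe that $(g_n(x))$ is eventually equal to $g(x)$, which is exactly stable convergence. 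Your version is self-contained and makes transparent precisely where the absolute extensor assumption enters (it is the only hypothesis of the theorem not already consumed by Theorem~\ref{th:4.5} and Corollary~\ref{cor:5.1}); the paper's version is shorter but delegates that same content to a black-box citation. Both are valid proofs of the equivalence.
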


\begin{proof}
The implication $(i)\Rightarrow (ii)$ follows from Theorem \ref{th:4.5}.

The implication $(ii)\Rightarrow (iii)$ is obvious.

$(iii)\Rightarrow (i)$. By Corollary~\ref{cor:5.1}, $g$ is $\sigma$-continuous. Now according to \cite[Theorem 6.3]{BaBo}, $g\in B^d_1(X,Z)$.
\end{proof}

The following questions concern the essentiality of come conditions in Theorems~\ref{th:4.6}.

\begin{question}\label{qu:4.9}
Let $X$ be a metric space, $(Z,\lambda)$ a metric equiconnected space and $f:X^2\to Z$ a mapping which is continuous with respect to the first variable and Lipschitz with respect to the second one. Does the mapping $g:X\to Z$, $g(x)=f(x,x)$, belong to the first stable Baire class?
\end{question}

\section{Characterization of diagonals of $CD$-mappings}

 Let $X$ and  $Y$ be normed spaces. A mapping $f:X\to Y$ is {\it Fr\'{e}chet differentiable at a point  $x_0\in X$} if there exists a linear continuous operator $T:X\to Y$ such that $$\displaystyle{\lim\limits_{t\to 0}\frac{f(x_0+tx)-f(x_0) - T(x)}{t}}=0.$$ If  $f:X\to Y$ is Fr\'{e}chet differentiable at every point $x\in X$, then $f$ is called {\it Fr\'{e}chet differentiable}.

\begin{lemma}\label{lm:7.1}
  Let $X$ be a normed space with (continuously) Fr\'{e}chet differentiable norm, $(H_n)_{n=1}^{\infty}$ a sequence of open in $X^2$ sets $H_n$ such that $\Delta=\{(x,x):x\in X\}\subseteq H_{n}$ for every $n\in\mathbb N$. Then there exist a sequence $(G_n)_{n=1}^{\infty}$ of open in $X^2$ sets $G_n$ and a sequence $(\varphi_n)_{n=1}^{\infty}$ of jointly continuous, Lipschitz  and (continuously) differentiable with respect to the second variable functions $\varphi_n:X^2\to [0,1]$ with the following conditions:

  $(1)$\,\,\, $\Delta\subseteq G_{n+1}\subseteq \overline{G_{n+1}}\subseteq G_n\subseteq H_n$ for every $n\in\mathbb N$;

  $(2)$\,\,\, $X^2\setminus G_n\subseteq\varphi_n^{-1}(0)$ and $\overline{G_{n+1}}\subseteq \varphi_n^{-1}(1)$ for every $n\in\mathbb N$.
  \end{lemma}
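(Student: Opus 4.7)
The plan is to mimic the construction in the proof of Lemma~\ref{lm:4.7}, but to replace the Urysohn-type cutoff $\beta/(\alpha+\beta)$ (which is only separately Lipschitz) by a smooth cutoff built from a one-dimensional plateau function composed with the norm. First I would apply Lemma~\ref{lm:4.6} inductively to select continuous functions $g_n : X \to (0,+\infty)$ satisfying $g_{n+1}(x) < \tfrac{1}{4} g_n(x)$ for every $x \in X$ and $n \in \mathbb N$, and such that
\[
G_n := \{(x,y) \in X^2 : |x-y|_X < g_n(x)\} \subseteq H_n.
\]
The inclusions in~(1) are then automatic, since $\overline{G_{n+1}} \subseteq \{(x,y) : |x-y|_X \le g_{n+1}(x)\} \subsetneq G_n$.

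Next I fix a Lipschitz, continuously differentiable plateau function $\phi : [0,+\infty) \to [0,1]$ with $\phi \equiv 1$ on $[0,\tfrac{1}{4}]$ and $\phi \equiv 0$ on $[\tfrac{1}{2},+\infty)$, and set
\[
\varphi_n(x,y) := \phi\!\left(\frac{|x-y|_X}{g_n(x)}\right).
\]
Property~(2) is immediate: on $\overline{G_{n+1}}$ the argument of $\phi$ is at most $g_{n+1}(x)/g_n(x) < \tfrac{1}{4}$, while outside $G_n$ it is at least $1$. Joint continuity of $\varphi_n$ follows from continuity of $g_n$, of the norm, and of $\phi$.

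The crucial step is differentiability (and the Lipschitz estimate) with respect to the second variable. By hypothesis, the norm on $X$ is (continuously) Fr\'echet differentiable on $X \setminus \{0\}$, so $y \mapsto |x - y|_X$ is (continuously) differentiable at every $y \neq x$; consequently $\varphi_n(x,\cdot)$ inherits this on $X \setminus \{x\}$ by the chain rule, with derivative bounded by $\operatorname{Lip}(\phi)/g_n(x)$, which yields the Lipschitz bound in $y$. The main obstacle is the diagonal $y = x$, where the norm itself fails to be Fr\'echet differentiable. This is resolved precisely by the plateau of $\phi$ on $[0,\tfrac{1}{4}]$: it guarantees that $\varphi_n(x,\cdot) \equiv 1$ on the neighbourhood $\{y : |y-x|_X < g_n(x)/4\}$ of $x$, so the Fr\'echet derivative of $\varphi_n(x,\cdot)$ at $y=x$ exists and equals zero, and continuous differentiability extends across the diagonal, completing the verification.
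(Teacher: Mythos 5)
Your proposal is correct and follows essentially the same route as the paper: Lemma~\ref{lm:4.6} produces the functions $g_n$ and the tubes $G_n=\{(x,y):\|x-y\|<g_n(x)\}$, and $\varphi_n$ is a smooth one-dimensional cutoff composed with $\|x-y\|$, whose plateau near the diagonal absorbs the non-differentiability of the norm at the origin. The only cosmetic difference is the parametrization: the paper uses $\psi\bigl(\tfrac{\|x-y\|-g_{n+1}(x)}{g_n(x)-g_{n+1}(x)}\bigr)$ transitioning between $g_{n+1}(x)$ and $g_n(x)$, while you rescale by $g_n(x)$ alone and compensate with the condition $g_{n+1}<\tfrac14 g_n$.
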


\begin{proof}
We first  using Lemma \ref{lm:4.6} construct a sequence $(g_n)_{n=1}^{\infty}$ of continuous functions $g_n:X\to (0,+\infty)$ such that $g_{n+1}(x)<g_n(x)$ for all $n\in\mathbb N$ and $x\in X$, and $G_n=\{(x,y)\in X^2:\|x-y\|<g_n(x)\} \subseteq H_n$ for every $n\in\mathbb N$, and choose a continuously differentiable function $\psi:\mathbb R\to[0,1]$ such that $\psi(x)=1$ if $x\leq 0$ and $\psi(x)=0$ if $x\geq 1$. It remains to observe that $\varphi_n(x,y)=\psi\left(\frac{\|x-y\|-g_{n+1}(x)}{g_n(x)-g_{n+1}(x)}\right)$ for all $n\in\mathbb N$ and $x,y\in X$.
\end{proof}

\begin{theorem}\label{th:7.2}
   Let $X$ be a normed space with a (continuously) Fr\'{e}chet differentiable norm, $Z$ a normed space and $g:X\to Z$. Then the following conditions are equivalent:

 $(i)$\,\,\,$g\in B^d_1(X,Z)$;

 $(ii)$\,\,\,there exists a mapping $f:X^2\to Z$ with the diagonal  $g$ which is continuous with respect to the first variable and is Lipschitz and continuously Fr\'{e}chet differentiable with respect to the second variable;

 $(iii)$\,\,\,there exists a mapping $f:X^2\to Z$ with the diagonal $g$ which is continuous with respect to the first variable and pointwise Lipschitz with respect to the second variable at every point of the diagonal $\Delta=\{(x,x):x\in X\}$.
\end{theorem}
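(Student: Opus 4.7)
The plan is to follow the pattern of Theorem~\ref{th:4.6}, proving the cycle $(i)\Rightarrow(ii)\Rightarrow(iii)\Rightarrow(i)$. The implication $(ii)\Rightarrow(iii)$ is immediate, and $(iii)\Rightarrow(i)$ runs exactly as there: Corollary~\ref{cor:5.1} (valid because $X$ and $Z$ are metric as normed spaces) yields $\sigma$-continuity of $g$, after which \cite[Theorem~6.3]{BaBo} gives $g\in B^d_1(X,Z)$. Hence the substantive content is the forward direction $(i)\Rightarrow(ii)$.

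For this, I would imitate the proof of Theorem~\ref{th:4.5}, substituting Lemma~\ref{lm:7.1} for Lemma~\ref{lm:4.7}. Given a pointwise stably convergent sequence $(g_n)_{n=1}^{\infty}$ of continuous mappings $g_n:X\to Z$ with limit $g$, define the open neighbourhoods of the diagonal
\[
H_n=\{(x,y)\in X^2:\|g_n(x)-g_n(y)\|<\tfrac{1}{n},\;\|g_{n+1}(x)-g_{n+1}(y)\|<\tfrac{1}{n}\}.
\]
Applying Lemma~\ref{lm:7.1} yields open sets $G_n\subseteq H_n$ and jointly continuous cutoffs $\varphi_n:X^2\to[0,1]$ that are Lipschitz and continuously Fr\'echet differentiable in the second variable. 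Taking $F_n:=\overline{G_{n+1}}$, $F_0=G_0=X^2$, and $\delta_n\equiv 1/n$, conditions $(1)$--$(3)$ of Theorem~\ref{th:3.1} all hold, so formula~$(\ref{eq:3.0})$ produces a separately continuous $f:X^2\to Z$ with diagonal $g$.

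It remains to verify that for each $x_0\in X$ the slice $h(y):=f(x_0,y)$ is both Lipschitz and of class $C^1$. Since $\lambda(a,b,t)=(1-t)a+tb$ in the normed space $Z$, on the annulus $A_n^{x_0}:=\{y:g_{n+1}(x_0)<\|x_0-y\|\leq g_n(x_0)\}$ the slice reduces to $h(y)=g_n(x_0)+\varphi_n(x_0,y)(g_{n+1}(x_0)-g_n(x_0))$, which is $C^1$ on each piece by the Fr\'echet differentiability of the norm combined with the smoothness of the cutoff $\psi$ from Lemma~\ref{lm:7.1}. Stable convergence furnishes $n_0$ with $g_n(x_0)=g(x_0)$ for all $n\geq n_0$: the annuli with $n\geq n_0$ collapse to $h\equiv g(x_0)$ (so $h$ is constant on a neighbourhood of $x_0$), and on the exterior $\{y:\|x_0-y\|>g_1(x_0)\}$ the slice equals the constant $g_1(x_0)$; only finitely many intermediate annuli need analysis.

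The chief obstacle is the $C^1$ gluing across the transition spheres $S_n:=\{y:\|x_0-y\|=g_{n+1}(x_0)\}$, which separate $A_n^{x_0}$ (outside) and $A_{n+1}^{x_0}$ (inside). The key observation is that $\psi$ is $C^1$ with $\psi\equiv 1$ on $(-\infty,0]$ and $\psi\equiv 0$ on $[1,\infty)$, which forces $\psi'(0)=\psi'(1)=0$. By the chain rule, the second-variable derivative of $\varphi_n(x_0,\cdot)$ (whose $\psi$-argument equals $0$ on $S_n$) and that of $\varphi_{n+1}(x_0,\cdot)$ (whose $\psi$-argument equals $1$ on $S_n$) both vanish on $S_n$; simultaneously the one-sided values of $h$ on $S_n$ both reduce to $g_{n+1}(x_0)$. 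Hence $h$ is $C^1$ across every transition sphere and therefore globally on $X$. The Lipschitz property then follows from the mean value inequality, since on each of the finitely many nontrivial annuli one obtains the bound $\|Dh(y)\|\leq \|\psi'\|_\infty\cdot\|g_{n+1}(x_0)-g_n(x_0)\|/(g_n(x_0)-g_{n+1}(x_0))$.
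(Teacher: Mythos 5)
Your proposal is correct and follows the paper's construction for $(i)\Rightarrow(ii)$ (the sets $H_n$, Lemma~\ref{lm:7.1}, $F_n=\overline{G_{n+1}}$, Theorem~\ref{th:3.1}, and the collapse to a constant near the diagonal via stable convergence) as well as its argument for $(iii)\Rightarrow(i)$; for the latter, note that you still need the remark, made explicitly in the paper, that $Z$ is an absolute extensor for $X$ by Dugundji's theorem before \cite[Theorem 6.3]{BaBo} applies. The one place where you genuinely diverge is the verification that the slice $h=f^{x_0}$ is $C^1$: you work annulus by annulus and glue across the transition spheres using $\psi'(0)=\psi'(1)=0$ together with the matching of one-sided values. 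This works, but it leaves a small step implicit --- in a normed space, matching one-sided $C^1$ data along a sphere does not immediately give Fr\'echet differentiability there; one must still estimate $h(y+v)-h(y)$ for increments $v$ whose segment crosses the sphere (which is manageable since the segment meets the ball boundary in at most two points, by convexity of the norm, and the common derivative on the sphere is $0$). The paper avoids this entirely by exploiting identity~(\ref{eq:3.1})/(\ref{eq:4.2}): the single formula in $\varphi_n,\varphi_{n+1}$ is valid on the \emph{open} set $G_n\setminus F_{n+1}$, these open sets cover $X^2\setminus E$, and differentiability is local, so no boundary matching is ever needed. Your approach buys a concrete derivative bound (hence the Lipschitz constant directly from the mean value inequality on the convex space $X$, in place of Proposition~\ref{pr:4.0}), at the cost of the gluing argument; also beware that you use $g_n$ both for the radius functions of Lemma~\ref{lm:7.1} and for the approximants of $g$, which should be disentangled in a final write-up.
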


\begin{proof}
$(i)\Rightarrow (ii)$. Let $(g_n)^{\infty}_{n=1}$ be a sequence of continuous mappings $g_n:X\to Z$ which pointwise stably converges to $g$. Let $G_0=F_0=X^2$ and $$H_n=\{(x,y)\in X^2: |g_n(x)-g_n(y)|_Z<\frac1n,\,|g_{n+1}(x)-g_{n+1}(y)|_Z<\frac1n\}$$  for every $n\in\mathbb N$.
By Lemma~\ref{lm:7.1} there exist  a sequence $(G_n)_{n=1}^{\infty}$ of open on $X^2$ sets $G_n$ and a sequence $(\varphi_n)_{n=1}^{\infty}$ of jointly continuous, Lipschitz  and (continuously) differentiable with respect to the second variable functions $\varphi_n:X^2\to [0,1]$ which satisfy conditions $(1)$ and $(2)$ of Lemma~\ref{lm:7.1}. For  every $n\in\mathbb N$ let $F_n=\overline{G_{n+1}}$. According to Theorem~\ref{th:3.1} the mapping $f:X^2\to Z$ defined by (\ref{eq:3.0}), where $\lambda(x,y,t)=(1-t)x+ty$, is separately continuous.

Remark that $\varphi_n(x,y)\varphi_{n+1}(x,y)=\varphi_{n+1}(x,y)$ for every $n\in\mathbb N$, therefore for {the} normed space  $Y$ condition~(\ref{eq:3.1}) is of the following form:
\begin{gather*}
f(x,y)=(1-\varphi_n(x,y))g_n(x)+\varphi_n(x,y)g_{n+1}(x)-
\end{gather*}
\begin{gather}\label{eq:4.2}
-\varphi_{n+1}(x,y)g_{n+1}(x)+\varphi_{n+1}(x,y)g_{n+2}(x)
\end{gather}
for all $(x,y)\in F_{n-1}\setminus F_{n+1}$.

Now it follows from the properties of functions $\varphi_n$ that $f$ is (continuously) Fr\'{e}chet differentiable and Lipschitz with respect to the second variable on the open set $G_0\setminus F_1=F_0\setminus F_1$ and on every open set $G_{n}\setminus F_{n+1}$,  $n\in\mathbb N$. Hence, $f$ is (continuously) Fr\'{e}chet differentiable and Lipschitz with respect to the second variable on the open set $X^2\setminus E$, where $E=\bigcap\limits_{n=1}^{\infty}G_n$ (the and Lipschitz property can be obtained by Proposition \ref{pr:4.0}).

Fix $(x_0,y_0)\in E$ and choose a number $n_0\in\mathbb N$ such that  $g_n(x_0)=g(x_0)$ for all $n>n_0$. Then $f(x_0,y)=g(x_0)$ for al $y\in \{x\in X: (x_0,x)\in G_{n_0+1}\}$. Therefore, $f$ is (continuously) Fr\'{e}chet differentiable with respect to the second variable at $(x_0,y_0)$. Moreover $f^{x_0}$ is Lipschitz by Proposition \ref{pr:4.0}.

The implication $(ii)\Rightarrow (iii)$ is obvious.

$(iii)\Rightarrow (i)$. By Corollary~\ref{cor:5.1}, $g$ is $\sigma$-continuous. Since by Dugundji theorem normed space $Z$ is an absolute extensor for $X$,  $g\in B^d_1(X,Z)$ by \cite[Theorem 6.3]{BaBo}.
\end{proof}

\begin{question}\label{qu:4.10}
 Let $X$, $Z$ be normed spaces and  $g\in B^d_1(X,Z)$. Does there exist a mapping $f:X^2\to Z$ with the diagonal $g$ which is continuous with respect to the first variable and is Lipschitz and continuously Fr\'{e}chet differentiable with respect to the second one?
\end{question}

\begin{question}\label{qu:4.11}
Let $X, Z$ be normed spaces and $f:X^2\to Z$ a mapping which is continuous with respect to the first variable and (weakly) G$\hat{a}$teaux differentiable with respect to the second one. Does the mapping $g:X\to Z$, $g(x)=f(x,x)$, belong to the first stable Baire class?
\end{question}


\end{document}